\newtheorem{theorem}{Theorem}[section]
\newtheorem{lemma}[theorem]{Lemma}
\theoremstyle{definition}
\newtheorem{remark}[theorem]{Remark}
\def\r{\mathbb R}
\begin{document}

\title[Rigidity of solitons of the Gauss curvature flow in Euclidean space]{Rigidity of solitons of the Gauss curvature flow in Euclidean space}
\author{Rafael L\'opez}
\address{ Departamento de Geometr\'{\i}a y Topolog\'{\i}a\\ Universidad de Granada. 18071 Granada, Spain}
\email{rcamino@ugr.es}
\keywords{Gauss curvature flow, translating soliton, shrinker, Gauss curvature, mean curvature.}

\subjclass{Mathematics Subject Classification 2000: 53C44,  53C22}

\begin{abstract}  In this paper, we consider $\lambda$-translating solitons and $\lambda$-shrinkers of the Gauss curvature flow in Euclidean space. We prove that planes and circular cylinders are the only $\lambda$-translating solitons with constant mean curvature. We also prove that planes, spheres and circular cylinders are the only $\lambda$-shrinkers  with constant mean curvature. We give a classification of the $\lambda$-translating solitons and $\lambda$-shrinkers with one constant principal curvature.
\end{abstract}

\maketitle

%%%%
\section{Introduction and statement of the results}
%%%%

 Let $\Sigma$ be a surface and let $\Phi:\Sigma\to\r^3$ be an    immersion  in Euclidean space $\r^3$. The Gauss curvature flow (GCF) is a one-parameter family of smooth   immersions $\Phi_t=\Phi(\cdot,t)\colon\Sigma\to \r^3$, $t\in [0,T)$ such that $\Phi_0=\Phi$ and satisfying   
$$\frac{\partial}{\partial t}\Phi(p,t)=-K(p,t)  N(p,t),\quad (p,t)\in \Sigma\times [0,T),$$
where   $N(p,t)$ is the unit normal of $\Phi(p,t)$ and $K(p,t)$ is the Gauss curvature at $\Phi(p,t)$ \cite{an1,ch,ur}.  In the study of the GCF, it is of special interest  the possible singularities that can appear \cite{an2,br}. These singularities are studied by means of solitons of the GCF. There are two   types of solitons. For its definition,  we consider a more general context. Let  $\lambda\in\r$ and $\vec{v}\in\r^3$  be a unit vector. The surface $\Sigma$ is said to be a {\it $\lambda$-translating soliton} of the  GCF if  
\begin{equation}\label{eq1}
K=\langle N,\vec{v}\rangle+\lambda.
\end{equation} 
The second type of solitons is  a {\it $\lambda$-shrinker}. Let $\alpha\in\r$, $\alpha\not=0$. A surface $\Sigma$ is said to be a $\lambda$-shrinker of the GCF if 
\begin{equation}\label{eq2}
K=\alpha\langle N,\Phi\rangle+\lambda.
\end{equation} 
  Examples of  solitons of the GCF are the following:
 \begin{enumerate}
 \item Planes. The   Gauss curvature is  $K=0$. If $N=\vec{w}$ is the unit normal vector of the plane, then the plane is a $\lambda$-translating soliton for $\lambda=-\langle\vec{w},\vec{v}\rangle$. Vector planes, that is, planes crossing the origin of $\r^3$, are   $\lambda$-shrinkers for $\lambda=0$ and for all $\alpha\in\r$.
 \item Spheres. A sphere of radius $r>0$ centered at the origin  is a $\lambda$-shrinker for all $\lambda,\alpha\in\r$ satisfying $\lambda r^2-\alpha r^3-1=0$.   Spheres are not $\lambda$-translating solitons.
 \item Circular cylinders. If   $\vec{v}$ is parallel to the axis of the cylinder, and since $K=0$, then the cylinder is a $\lambda$-translating soliton  for $\lambda=0$. If the radius is $r>0$ and if we take the inward orientation on the cylinder, then it is a $\lambda$-shrinker for all $\alpha,\lambda$ such that $\lambda-\alpha r=0$
  \end{enumerate}

Similarly,  there is   a theory on the mean curvature flow (MCF) as well as the study of the corresponding solitons. Solitons are  the so-called translating  solitons and shrinkers, where the mean curvature $H$ satisfies  $H=\langle N,\vec{v}\rangle+\lambda$ and $H=\langle N,\Phi\rangle+\lambda$, respectively. For $\lambda=0$, these solitons are models of  type II singularities under MCF  \cite{hu}. In the literature of the  MCF theory, there is a set of results of `rigidity type' that characterize those solitons with constant Gauss curvature or assuming some Weingarten relation. For example, if $\lambda=0$, the only translating solitons with  $K=0$ are planes and  grim reapers \cite{ms}. It was proved in  \cite{li}  that there are no complete $\lambda$-translating solitons with nonzero constant Gauss.   If it is assumed a linear Weingarten relation between its curvatures,  then the only translating solitons are planes, circular cylinders and   a  certain type of cylindrical surfaces \cite{flly}. For  shrinkers, similar results have been obtained \cite{co,fu2,yf,yfl}.

Recently, in \cite{fl} the authors have studied surfaces in Euclidean space satisfying the equation $K^\alpha=\langle N,\vec{v}\rangle$. Among their results, they have proved that if the mean curvature is constant, then the surface is a plane or a circular cylinder. Notice that this equation coincides with \eqref{eq1} when $\lambda=0$ and $\alpha=1$. 

In this paper we extend this type of results for solitons of the CGF. As hypothesis, it is  assumed that the mean curvature $H$ is constant.   For the two types of solitons of the GCF, we have the following rigidity results.

\begin{theorem}\label{t1}
Planes and circular cylinders are the only   $\lambda$-translating solitons of the GCF with constant mean curvature.
 \end{theorem}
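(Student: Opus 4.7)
The plan is to show that the hypothesis forces both principal curvatures to be individually constant, after which the classical classification of surfaces in $\r^3$ with two constant principal curvatures yields that $\Sigma$ is contained in a plane, a sphere, or a right circular cylinder; the sphere is excluded by example~(2) of the introduction, which notes that spheres are not $\lambda$-translating solitons. Writing $\rho := \langle N,\vec v\rangle$, the soliton equation \eqref{eq1} reads $K = \rho + \lambda$. Since $k_1+k_2 = 2H$ is constant and $k_1k_2 = K$, the principal curvatures are determined algebraically by $\rho$: explicitly, $k_{1,2} = H\pm u$ with $u := \sqrt{H^2-\rho-\lambda}\ge 0$. Thus the whole problem reduces to showing that the scalar function $u$ is constant. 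On the totally umbilic locus $\{u=0\}$ this is automatic, so the substantive analysis takes place on the open set $U = \{u>0\}$ of non-umbilic points.

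On $U$ we choose an orthonormal principal frame $\{e_1,e_2\}$ with $Ae_i = k_ie_i$, and write $v_i := \langle \vec v, e_i\rangle$. Taking the intrinsic gradient of the soliton equation gives $\nabla K = -A\vec v^T$, and comparing with $\nabla K = -2u\,\nabla u$ yields $2u\,e_i u = k_i v_i$, which expresses $v_i$ algebraically in terms of $u$ and $\nabla u$ away from zeros of $k_i$. Simultaneously, the Codazzi equations together with $k_1+k_2 = 2H$ determine the connection coefficients of the frame, $a := \langle\nabla_{e_1}e_1,e_2\rangle = (e_2u)/(2u)$ and $b := \langle\nabla_{e_2}e_1,e_2\rangle = -(e_1u)/(2u)$, and the intrinsic Gauss equation $K = e_2 a - e_1 b - a^2 - b^2$ then produces the classical CMC identity $K = \tfrac12\Delta\log u$ on $U$. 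Finally, the normalization $|\vec v|^2 = v_1^2+v_2^2+\rho^2 = 1$ translates, after substituting the formulas for $v_i$ and $\rho = H^2-u^2-\lambda$, into a pointwise algebraic constraint relating $u$ with $|\nabla u|^2$.

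To close the system, we exploit the fact that $\vec v$ is a constant vector of $\r^3$: differentiating the decomposition $\vec v = v_1e_1 + v_2e_2 + \rho N$ along $e_1$ and $e_2$ with the Gauss--Weingarten formulas produces four first-order identities expressing $e_jv_i$ in terms of $v_i,\rho,a,b$ and the $k_i$. After substituting the formulas for $v_i,a,b$ from above, these become a second-order system in $u$ alone. Imposing compatibility by matching the mixed second derivatives of $u$ through the commutator $[e_1,e_2] = -ae_1 - be_2$, together with the Gauss identity and the normalization constraint, produces an over-determined system that forces $e_1u = e_2u = 0$ throughout $U$. Hence $u$ is locally constant on $U$, continuity extends this to all of $\Sigma$, both principal curvatures are therefore constant on $\Sigma$, and the classification concludes the proof.

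The main obstacle is the final compatibility step. Individually, the derived identities (the formula for $v_i$, the normalization constraint, the Codazzi relations, and the Gauss equation) each admit many non-constant local solutions in $u$; constancy is pinned down only by their joint interaction, and carrying out the elimination of second derivatives without introducing spurious branches is the delicate computational core of the argument. Moreover, the quotient formulas $v_i = 2u(e_iu)/k_i$ degenerate at points where $H+u = 0$ or $H-u = 0$ (one principal curvature vanishing), so those loci have to be handled separately by returning to the unreduced relation $2u\,e_iu = k_iv_i$.
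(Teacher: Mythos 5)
Your strategy is essentially the paper's: reduce to showing that the non\-constant\--curvature case is impossible by differentiating the parallel vector $\vec v=v_1e_1+v_2e_2+\rho N$ in a principal frame, feeding the resulting first\--order system into the Codazzi relations and the intrinsic Gauss equation \eqref{bb}, and concluding that the system is over\--determined. Your preliminary identities are all correct and match the paper's (your $v_i=2u(e_iu)/k_i$ is \eqref{c1}, your $a,b$ are $\omega(e_1),\omega(e_2)$ of \eqref{c2}, and $u=\kappa_1-H$). The isoparametric reduction and the exclusion of the sphere are also fine.

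The genuine gap is that the decisive step --- ``imposing compatibility \dots forces $e_1u=e_2u=0$'' --- is announced but not carried out, and it is not a routine consequence of matching mixed second derivatives. The commutator identity $e_1e_2(\kappa_1)-e_2e_1(\kappa_1)=[e_1,e_2](\kappa_1)$ is automatically consistent with the derived Hessian formulas and yields nothing; what actually produces the contradiction in the paper is a specific elimination: one obtains \emph{two} independent expressions for each of $e_{11}(\kappa_1)$, $e_{22}(\kappa_1)$ (equations \eqref{c7}--\eqref{c10}), substitutes them into the Gauss equation \eqref{c11} to get a first integral $2e_1(\kappa_1)^2+2e_2(\kappa_1)^2+R_1=0$ \eqref{c14} with $R_1$ a polynomial in $\kappa_1$ with constant coefficients, differentiates that first integral along $e_1$, reduces again via \eqref{c7} and \eqref{c9} to a second relation \eqref{c16}, and only then does eliminating $e_1(\kappa_1)^2+e_2(\kappa_1)^2$ between the two leave a nontrivial polynomial identity in $\kappa_1$ alone, forcing constancy. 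Your proposal gives no argument that the system is actually inconsistent with non\--constant $u$, and it also omits the necessary case split: when $e_1(\kappa_1)\equiv 0$ but $e_2(\kappa_1)\not\equiv 0$ the above elimination degenerates and the paper must run a separate computation (\eqref{e77} through the quartic that follows). Note also that the pointwise constraint you propose to extract from $|\vec v|^2=1$ is not the first integral the paper uses (that one comes from the Gauss equation), so even the algebraic input to your elimination differs from the one that is known to close; as written, the core of the proof is missing.
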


\begin{theorem}\label{t2}
Planes, spheres and circular cylinders are the only  $\lambda$-shrinkers of the GCF  with constant mean curvature. \end{theorem}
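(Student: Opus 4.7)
The strategy is to show that the principal curvatures $k_1, k_2$ of $\Sigma$ must both be constant; the classical classification of surfaces in $\r^3$ with constant principal curvatures then yields that $\Sigma$ is a plane, a sphere, or a right circular cylinder, and the examples in the introduction pick out exactly which of these satisfy \eqref{eq2}. If $\Sigma$ is totally umbilic, we are in the plane/sphere case, so assume there is an open set $\Omega\subset\Sigma$ on which $k_1\neq k_2$ and work toward the reduction to the cylindrical situation.

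On $\Omega$, take an orthonormal frame of principal directions $\{e_1,e_2\}$, with $Ae_i = k_i e_i$, and set $\phi_i = \langle e_i,\Phi\rangle$ and $h = \langle N,\Phi\rangle$. Differentiating \eqref{eq2} along $e_i$ and using the Weingarten formula $\nabla_{e_i}N = -k_i e_i$ yields $e_i(K) = -\alpha k_i\phi_i$. Since $H=(k_1+k_2)/2$ is constant we have $e_i(k_1) = -e_i(k_2)$, so $e_i(K) = (k_2-k_1)e_i(k_1)$, and hence
\begin{equation*}
(k_1-k_2)\,e_i(k_1) = \alpha k_i\phi_i,\qquad i=1,2.
\end{equation*}
The Codazzi equations in the principal frame read $e_2(k_1) = p(k_1-k_2)$ and $e_1(k_2) = q(k_1-k_2)$, where $p=\langle\nabla_{e_1}e_1,e_2\rangle$ and $q=\langle\nabla_{e_2}e_1,e_2\rangle$. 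Comparing with the previous relation determines
\begin{equation*}
p = \frac{\alpha k_2\phi_2}{(k_1-k_2)^2},\qquad q = -\frac{\alpha k_1\phi_1}{(k_1-k_2)^2}.
\end{equation*}

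The remaining step is to combine the Gauss equation $K = e_2(p)-e_1(q)-p^2-q^2$ with the identities for $e_i(\phi_j)$ that follow from $D_X\Phi=X$ (namely $e_1(\phi_1) = p\phi_2+k_1 h+1$, $e_2(\phi_2) = -q\phi_1+k_2 h+1$, $e_2(\phi_1) = q\phi_2$, $e_1(\phi_2) = -p\phi_1$) to derive a polynomial identity in $\phi_1,\phi_2,k_1,k_2$. A second, independent relation is obtained by inserting \eqref{eq2} into the known formula $\Delta h = -|A|^2 h-2H$, which via $|A|^2 = 4H^2-2K$ becomes a polynomial identity in $K$ and $\Delta K$ that, combined with the gradient formula $\nabla K = -\alpha A\Phi^T$, furnishes a second equation in the same unknowns. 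Eliminating $\phi_1,\phi_2$ between the two relations should force $k_1-k_2$ to be constant on $\Omega$; together with constancy of $k_1+k_2$, both principal curvatures are then constant. But $k_1\neq k_2$ both constant, plugged back into Codazzi, forces $p=q=0$, and then the Gauss equation gives $K=0$. Hence one of the $k_i$ vanishes and $\Omega$ is a piece of a right circular cylinder; since the constant principal curvatures of this cylinder are distinct, $\Omega$ is closed in $\Sigma$ and must equal $\Sigma$ by connectedness.

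The main obstacle is organizing this final elimination cleanly. The natural identities (Gauss, Codazzi, and the Laplace formula for $h$) are not all algebraically independent once the expressions for $p,q$ have been substituted, so one has to isolate the pair of relations that actually encode new information. The calculation parallels the one needed for Theorem~\ref{t1}, with the position vector $\Phi$ playing the role of the fixed translation direction $\vec v$ there; the novelty here is that the identities $e_i(\phi_j)$ carry the additional terms $k_i h + 1$ arising from the dependence of $\phi_i$ on the position of the point, which modifies the final polynomial in the $\phi_i$ compared to the translating case.
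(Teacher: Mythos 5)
Your setup coincides with the paper's: the principal frame, the functions $\phi_1,\phi_2$ (the paper's $\gamma,\mu$), the relations $e_i(K)=-\alpha\kappa_i\phi_i$, the Codazzi identities expressing $p=\omega(e_1)$ and $q=\omega(e_2)$ through $\phi_1,\phi_2$, and the derivative identities for $e_i(\phi_j)$, which are exactly the system \eqref{d3}. The gap is that the entire content of the proof in the non-isoparametric case is the elimination you defer. You assert that combining the Gauss equation with $\Delta h=-|A|^2h-2H$ ``should force'' $\kappa_1-\kappa_2$ to be constant, while conceding in the same breath that these identities ``are not all algebraically independent once the expressions for $p,q$ have been substituted.'' That concession is the problem: the Laplacian formula for the support function is itself a consequence of the Gauss, Codazzi and structure equations you have already consumed, so there is no guarantee it contributes a second independent relation, and you have neither written down the two polynomial identities nor performed the elimination. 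As it stands, the claim that the principal curvatures are constant on $\Omega$ is unproved.

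For comparison, the paper closes this step as follows. It computes $e_i(\gamma)$ and $e_i(\mu)$ in two ways (from the structure equations \eqref{d3} and by differentiating the explicit formulas for $\gamma,\mu$ in terms of $e_i(\kappa_1)$), solves for the Hessian entries $e_{ij}(\kappa_1)$, and substitutes them into the Gauss equation to obtain a first-order relation $2e_1(\kappa_1)^2+2e_2(\kappa_1)^2+R_1(\kappa_1)=0$ with $R_1$ a polynomial with constant coefficients. It then differentiates this relation along $e_1$, resubstitutes the Hessian formulas, and obtains a second relation $2He_1(\kappa_1)^2+2He_2(\kappa_1)^2+R_2(\kappa_1)=0$; eliminating the gradient terms leaves a nontrivial polynomial in $\kappa_1$ with constant coefficients, forcing $\kappa_1$ constant and yielding the contradiction. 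It must also handle separately the degenerate subcase $e_1(\kappa_1)\equiv 0$, where one instead differentiates along $e_2$; your outline omits this case split entirely. Until you exhibit two genuinely independent relations of this kind and carry the elimination through, the argument does not close.
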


Theorem \ref{t1} is proved in Sect.   \ref{s3}, while Thm. \ref{t2} is proved in Sect. \ref{s6}. Besides these results, we also classify the solitons with one constant principal curvature.  For the case of $\lambda$-translating solitons, we prove that these surfaces are planes, circular cylinders and a family of surfaces that make constant angle with the direction $\vec{v}$: see Thm. \ref{t-c} in Sect. \ref{s3}. For $\lambda$-shrinkers, it is proved that if a principal curvature is constant, then the surface is a plane, a sphere or a circular cylinder: see Thm. \ref{t-d} in Sect. \ref{s5}.  

In opinion of the author of the present paper, with a  more effort, such results may be generalized replacing the hypothesis of the constancy of the mean curvature by other assumptions, as for example, a certain Weingarten relation between the principal curvatures or between $K$ and $H$. Notice that similar results can be found in \cite{fl,flly,fu2}. 

%%%%%%%%%%%%%%%%%%%%%%%%%%%%%%%%%%%%%%%%%%
\section{Preliminaries}\label{s2}
%%%%%%%%%%%%%%%%%%%%%%%%%%%%%%%%%%%%%%%%%%%

In this section we recall the coordinates of a surface given by lines of curvatures. These coordinates are defined on open sets of a surface free of umbilic points. The following computations are common in the discussions for both types of solitons of the GCF. 

Let $\Sigma$ be an orientable surface immersed in $\r^3$. Denote by  $\nabla$ and $\overline{\nabla}$ be the Levi-Civita connections of $\Sigma$ and
$\mathbb{R}^3$, respectively.  Denote by $N$ the Gauss map of $\Sigma$. Let $\mathfrak{X}(\Sigma)$ be the space of tangent vector fields of $\Sigma$. The  Gauss and Weingarten formulae are, respectively, 
\begin{equation*}  
\begin{split}
    \overline{\nabla}_XY&=\nabla_XY+h(X,Y), \\
    \overline{\nabla}_XN&=-SX
    \end{split}
\end{equation*}
for all $X,Y\in\mathfrak{X}(\Sigma)$, where $h\colon T\Sigma\times T\Sigma\to T^\perp\Sigma$ and  $S\colon T\Sigma\to  T\Sigma$ are the second fundamental form and the shape
operator, respectively. The  Gauss
equation and Codazzi equations,  
\begin{equation*}
\begin{split}
    \langle R(X,Y)W,Z\rangle&=\langle h(Y,W),h(X,Z)\rangle-\langle
    h(X,W),h(Y,Z)\rangle\\
    (\nabla_XS)Y&=(\nabla_YS)X,
    \end{split}
\end{equation*}
where $R$ is the curvature tensor of $\nabla$. The   mean curvature vector $\vec{H}$ is defined by
\[     \vec{H}=\frac12{\rm trace}\ h=HN,\]
where $H$ is the mean curvature of $\Sigma$. If $\kappa_1$ and $\kappa_2$ are the principal curvatures of $\Sigma$, then $H=\frac{\kappa_1+\kappa_2}{2}$ and the Gauss curvature is $K=\kappa_1\kappa_2$.

In an open set of $\Sigma$ of non-umbilical points, consider a local
orthonormal frame $\{e_1,e_2\}$ given by line of curvature coordinates. The expression of the shape operator $S$ is 
\begin{equation}\label{b0}
\left\{
\begin{split}
Se_1&=\kappa_1 e_1\\
Se_2&=\kappa_2 e_2.
\end{split}
\right.
    \end{equation}
Since $\{e_1,e_2\}$ is an orthonormal frame, if $X\in\mathfrak{X}(\Sigma)$ then
\begin{equation*}
\begin{split}
\nabla_X e_1&=\omega(X)e_2\\
 \nabla_X e_2&=-\omega(X)e_1,
\end{split}
\end{equation*}
where $\omega(X)=\langle\nabla_X e_1,e_2\rangle=-\langle\nabla_X e_2,e_1\rangle$. Then we have 
\begin{equation}\label{eiej}
\left\{
\begin{split}
&\nabla_{e_1}e_1=\omega(e_1)e_2\quad \nabla_{e_1}e_2=-\omega(e_1)e_1\\
&\nabla_{e_2}e_1=\omega(e_2)e_2\quad \nabla_{e_2}e_2=-\omega(e_2)e_1.
\end{split}\right.
\end{equation}
Using the Codazzi equations, we have  
\begin{equation}\label{b1}
    \left\{
    \begin{array}{ll}
        e_{2}(\kappa_1)=(\kappa_1-\kappa_2)\omega(e_1)\\
        e_{1}(\kappa_2)=(\kappa_1-\kappa_2)\omega(e_2).
    \end{array}
    \right.
\end{equation}
In the set $\Omega$, the  Gauss equation  implies the following expression for $K$,  
\begin{equation}\label{bb}
   K=\kappa_1\kappa_2=-e_1\left(\frac{e_1( \kappa_2 )}{\kappa_1-\kappa_2} \right)+e_2\left(\frac{e_2( \kappa_1 )}{\kappa_1 -\kappa_2}\right)
   -\frac{e_1( \kappa_2)^2+ e_2( \kappa_1 )^2}{(\kappa_1 -\kappa_2)^2}.
\end{equation}

%%%%%%%%%%%%%%%%%%%%
\section{Translating solitons: a principal curvature is constant}\label{s3}
%%%%%%%%%%%%%%%%%%%%%

 In this section, we classify all $\lambda$-translating solitons where  one principal curvature is constant. 
 
Let $\Sigma$ be a  $\lambda$-translating soliton of the GCF.  For   the unit vector $\vec{v}\in\mathbb{R}^3$ in \eqref{eq1}, consider the decomposition of $\vec{v}$ in its tangent and normal part with respect to $\Sigma$, namely, 
$$\vec{v}=\vec{v}^\top+\vec{v}^\perp=\vec{v}^\top+ \langle\vec{v},N\rangle N.$$
If $\Sigma$ satisfies \eqref{eq1}, then  
$$\vec{v}^\perp=(K-\lambda)N,\qquad \mbox{on $\Sigma$.}$$

If the interior of the set of umbilic points is not empty, then $\Sigma$ is umbilical in an open set. Since spheres do not satisfy \eqref{eq1}, then this open set is contained in a plane. Thus, if $\Sigma$ does not contain open sets of planes, the interior  of the set of umbilic points is an empty set. Let $p\in \Sigma$ be a non-umbilic point  and we work in open set $\Omega\subset \Sigma$ containing $p$, consisting entirely of non-umbilic points. In $\Omega$, consider line of curvature coordinates $\{e_1,e_2\}$ as we   described in Sect. \ref{s2}.

Decompose the   tangent part $\vec{v}^\top$ in coordinates   with respect to $\{e_1,e_2\}$ by
\begin{equation}\label{vtop}
\vec{v}^{\top}=\gamma e_{1}+\mu e_{2},
\end{equation}
where $\gamma$ and $\mu$ are smooth functions on $\Omega$. 

\begin{lemma}\label{l1}
Let $\Omega$ be a subset of non-umbilical points of a $\lambda$-translating soliton of the GCF. Then the functions   $\gamma$ and $\mu$ satisfy the following equations
    \begin{equation}\label{b3}
    \left\{
\begin{split}
            &e_{1}(\gamma)-\mu\omega(e_1)-(K-\lambda)\kappa_1=0\\
            &e_{2}(\gamma)-\mu\omega(e_2)=0\\
            &e_{1}(\mu)+\gamma\omega(e_1)=0\\
            &e_{2}(\mu)+\gamma\omega(e_2)-(K-\lambda)\kappa_2=0\\
            &e_{1}(K)+\gamma\kappa_1=0\\
            &e_{2}(K)+\mu\kappa_2=0.
        \end{split}\right.
    \end{equation}
\end{lemma}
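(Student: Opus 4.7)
The plan is to use the fact that $\vec v$ is a constant vector field in $\mathbb R^3$, so that $\overline\nabla_X \vec v = 0$ for every tangent direction $X$. Because $\Sigma$ is a $\lambda$-translating soliton, the defining equation $K=\langle N,\vec v\rangle+\lambda$ gives $\langle N,\vec v\rangle=K-\lambda$, and combining this with the decomposition \eqref{vtop} we get the ambient expression
\[
\vec v=\gamma e_1+\mu e_2+(K-\lambda)N.
\]
All six identities in \eqref{b3} should then pop out by reading off the $e_1$, $e_2$ and $N$ components of $\overline\nabla_{e_i}\vec v=0$ for $i=1,2$.

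First I would expand $\overline\nabla_{e_1}\vec v$ using the Gauss and Weingarten formulas together with \eqref{eiej}, keeping in mind that $h(e_1,e_2)=0$ because $\{e_1,e_2\}$ are principal directions. Concretely:
\[
\overline\nabla_{e_1}e_1=\omega(e_1)e_2+\kappa_1 N,\qquad
\overline\nabla_{e_1}e_2=-\omega(e_1)e_1,\qquad
\overline\nabla_{e_1}N=-\kappa_1 e_1,
\]
so the three components of $\overline\nabla_{e_1}\vec v=0$ become, respectively, the first, third and fifth equations of \eqref{b3}. I would then repeat the same computation with $e_2$, using
\[
\overline\nabla_{e_2}e_1=\omega(e_2)e_2,\qquad
\overline\nabla_{e_2}e_2=-\omega(e_2)e_1+\kappa_2 N,\qquad
\overline\nabla_{e_2}N=-\kappa_2 e_2,
\]
to produce the remaining three equations (second, fourth and sixth).

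There is no real obstacle here, just bookkeeping: the only thing one has to watch is the sign of $\omega$ in each line of \eqref{eiej} and the fact that the off-diagonal components of $h$ vanish in a principal frame, which is exactly what collapses the $N$-components of $\overline\nabla_{e_i}\vec v$ to the clean relations involving only $e_i(K)$, $\gamma$, $\mu$, $\kappa_1$, $\kappa_2$. In particular, no use of the Codazzi identities \eqref{b1} is required for this lemma; those come later when one inserts \eqref{b3} into \eqref{bb}.
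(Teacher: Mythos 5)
Your proposal is correct and follows essentially the same route as the paper: both start from $\overline\nabla_X\vec v=0$ with $\vec v=\gamma e_1+\mu e_2+(K-\lambda)N$, apply the Gauss and Weingarten formulas in the principal frame, and read off the $e_1$, $e_2$ and $N$ components. The only cosmetic difference is that the paper first separates the tangential and normal equations abstractly before substituting $X=e_1,e_2$, while you expand the frame derivatives directly; the computations and signs all check out.
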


\begin{proof} From $\vec{v}^{\perp}=(K-\lambda)N$, the Gauss and
Weingarten formulas imply 
\begin{equation*}
\begin{split}
    0&=\overline{\nabla}_{X}\vec{v}=\overline{\nabla}_{X}(\vec{v}^{\top}+\vec{v}^{\perp})\\
    &={\nabla}_{X}\vec{v}^{\top}+h(X,\vec{v}^{\top})+X(K)N-(K-\lambda)SX 
\end{split}
\end{equation*}
for all   $X\in\mathfrak{X}(\Sigma)$.
Thus,  the tangent and the normal parts in this identity give
\begin{equation*} 
    \left\{
    \begin{array}{ll}
        {\nabla}_{X}\vec{v}^{\top}-(K-\lambda)SX=0\\
        h(X, \vec{v}^{\top})+X(K)N=0.
    \end{array}
    \right.
\end{equation*}
In both equations, we substitute $X$ by $e_1$ and   $e_2$. Then \eqref{b3} is a straightforward consequence of identifying the coordinates with respect to the basis $\{e_1,e_2\}$. We show one example of that. Taking $X=e_1$, we have that the tangent part is 
$\nabla_{e_1}(\gamma e_1+\mu e_2)-(K-\lambda)Se_1=0$. This gives 
$$e_1(\gamma)e_1+e_1(\mu)e_2-(K-\lambda)\kappa_1e_1+\gamma\omega(e_1)e_2-\mu\omega(e_1)e_1=0.$$
This gives the first and the third equation of \eqref{b3}.
\end{proof}

From the previous computations, we give the first result of classification of $\lambda$-translating solitons of the GCF assuming that one of the two principal curvatures of the surface is constant.   In  this context, it will appear   the surfaces in $\r^3$ that make a constant angle with a fixed direction of $\r^3$: see \cite{mn} for its classification.

\begin{theorem}\label{t-c} Let $\Sigma$ be a $\lambda$-translating soliton of the GCF. If a principal curvature is constant, then $\Sigma$ is one of the following surfaces:
\begin{enumerate}
\item a plane;
\item a cylindrical surface $C\times\r\vec{v}$, where $C\subset\r^3$ is a curve contained in a plane orthogonal to $\vec{v}$;
\item  $K=0$, $0<|\lambda|<1$ and $\Sigma$ is a surface which makes a constant angle $\theta$ with $\vec{v}$, with $\cos\theta=|\lambda|$.
\end{enumerate}
\end{theorem}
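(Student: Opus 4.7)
My plan is to first dispose of umbilic open sets, then pass to a region of non-umbilic points equipped with line-of-curvature coordinates, show that the constant principal curvature must in fact be zero apart from the circular cylinder case, and finally classify the remaining $K=0$ surfaces by the value of $|\lambda|$.

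If an open set of umbilic points exists, $\Sigma$ is locally a plane or a sphere there; since spheres fail \eqref{eq1}, one lands in case (1). Otherwise, I work on an open set $\Omega$ of non-umbilic points and take line-of-curvature coordinates $\{e_1,e_2\}$ with $\kappa_1\equiv c$ constant (after relabeling). The Codazzi equations \eqref{b1} then give $\omega(e_1)=0$, and \eqref{b3} simplifies to
\begin{equation*}
e_1(\gamma)=c(K-\lambda),\quad e_1(\mu)=0,\quad e_1(K)=-c\gamma,\quad e_2(K)=-\mu\kappa_2.
\end{equation*}

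Suppose first $c\neq 0$. Since $K=c\kappa_2$, the third equation above reads $e_1(\kappa_2)=-\gamma$, and combined with the Codazzi identity $e_1(\kappa_2)=(c-\kappa_2)\omega(e_2)$ it yields $\omega(e_2)=\gamma/(\kappa_2-c)$. Plugging these expressions into the Gauss equation \eqref{bb} and simplifying produces the key algebraic identity
\begin{equation*}
2\gamma^{2}=c(c-\kappa_2)(\kappa_2^{2}-\lambda).
\end{equation*}
Differentiating this along $e_1$ and using $e_1(\gamma)=c(c\kappa_2-\lambda)$ and $e_1(\kappa_2)=-\gamma$ gives, after cancellation, $c\gamma(\kappa_2^{2}-2c\kappa_2+\lambda)=0$. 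Hence $\gamma\equiv 0$ on $\Omega$ (the alternative forces $\kappa_2$ to satisfy a constant-coefficient quadratic, hence to be locally constant, and reinjected into $e_1(\kappa_2)=-\gamma$ this again gives $\gamma=0$). With $\gamma\equiv 0$ the key identity forces $\kappa_2^{2}=\lambda$, so both principal curvatures are constant; the classical classification of such surfaces in $\r^3$, together with $\kappa_1=c\neq 0$ and $\kappa_1\neq\kappa_2$, forces $\kappa_2=0$ and $\lambda=0$, so $\Sigma$ is a circular cylinder. Writing $\vec{v}=\mu e_2+(K-\lambda)N=\mu e_2$ shows $\vec{v}$ is along the axis, placing $\Sigma$ in case (2).

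The remaining case is $c=0$, where $K\equiv 0$ and $\langle N,\vec{v}\rangle=-\lambda$ is constant, so $|\lambda|\le 1$. If $|\lambda|=1$, $N$ is parallel to $\vec{v}$ and $\Sigma$ is a plane (case (1)). If $\lambda=0$, $\vec{v}$ is tangent and the sixth equation of \eqref{b3} together with $\kappa_2\neq 0$ forces $\mu=0$; thus $\vec{v}=\gamma e_1$ is along the asymptotic direction $e_1$, which is the ruling of the developable $K=0$ surface, so $\Sigma=C\times\r\vec{v}$ with $C$ contained in a plane orthogonal to $\vec{v}$ (case (2)). Finally, $0<|\lambda|<1$ is precisely the constant-angle situation of case (3), whose structural classification is supplied by \cite{mn}. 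The main obstacle is the extraction and differentiation of the identity $2\gamma^{2}=c(c-\kappa_2)(\kappa_2^{2}-\lambda)$: several substitutions from \eqref{b3}, \eqref{b1} and \eqref{bb} must be combined so that the linear and quadratic terms in $\gamma$ cancel correctly. Once this identity is in hand, the rest is geometric.
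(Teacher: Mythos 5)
Your proof is correct and follows essentially the same route as the paper: line-of-curvature coordinates on the non-umbilic set, the structure equations of Lemma \ref{l1} combined with Codazzi and the Gauss equation \eqref{bb} to produce the identity $2\gamma^{2}=c(c-\kappa_2)(\kappa_2^{2}-\lambda)$ (which is exactly the paper's relation $2e_2(\kappa_1)^2=-(K-\lambda)\kappa_2(\kappa_1-\kappa_2)-\kappa_1\kappa_2(\kappa_1-\kappa_2)^2$ with the roles of the indices swapped), then differentiation to force a constant-coefficient polynomial in the non-constant curvature. The only differences are cosmetic: you conclude the nonzero case by showing both curvatures are constant and identifying the circular cylinder rather than by contradiction, and your quadratic $\kappa_2^2-2c\kappa_2+\lambda=0$ in fact matches an independent recomputation better than the paper's stated $3\kappa_1^2-6\kappa_2\kappa_1-5\lambda=0$ (the $\lambda$-coefficient there appears to be a slip), with no effect on the conclusion.
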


\begin{proof} Suppose that the principal curvature $\kappa_2$ is constant. If the   principal curvature $\kappa_1$ is also constant, then the surface is isoparametric. This implies  that $\Sigma$ is a plane, a sphere or a circular cylinder but sphere is not a translating soliton. This proves that the surface is a plane or a circular cylinder. Both examples are included in the cases (1) and (2) of the theorem, respectively. 

From now, we discard the case that $\kappa_1$ is constant. We distinguish  the cases $\kappa_2=0$ and $\kappa_2\not=0$. 
\begin{enumerate}
\item Case $\kappa_2=0$. Then $K=0$. Since $N$ and $\vec{v}$ are unit vectors, we have that $|\lambda|\leq 1$. Then we have $\langle N,\vec{v}\rangle=-\lambda$. This implies that $\Sigma$ is a surface that makes a constant angle with the direction $\vec{v}$. These surfaces were classified in \cite{mn}. For example, if $|\lambda|=1$, then $\Sigma$ is a plane orthogonal to $\vec{v}$. If $|\lambda|=0$, then $\Sigma$ is a cylindrical surface $C\times \r\vec{v}$ where $C$ is a curve contained in a plane orthogonal to $\vec{v}$. 

\item Case $\kappa_2\not=0$.    Let us work on the  open set $\Omega\subset \Sigma$ formed by non-umbilic points. From \eqref{b1} we have $\omega(e_2)=0$. By using  the last and the fourth expressions of \eqref{b3}, we deduce   $\mu=-e_2(\kappa_1)$ and $e_2(\mu)=(K-\lambda)\kappa_2$, respectively. From the first identity, we have  
$e_2(\mu)=-e_{22}(\kappa_1)$, where  $e_{ij}(\kappa_1)=e_ie_j(\kappa_1)$ for all $i,j$. Hence 
\begin{equation}\label{e22}
e_{22}(\kappa_1)=-(K-\lambda)\kappa_2.
\end{equation}
Using this identity, and because $\kappa_2$ is constant, equation \eqref{bb} is 
$$\kappa_1\kappa_2=\frac{e_{22}(\kappa_1)}{\kappa_1-\kappa_2}-2\frac{e_2(\kappa_1)^2}{(\kappa_1-\kappa_2)^2}=
-\frac{(K-\lambda)\kappa_2}{\kappa_1-\kappa_2}-2\frac{e_2(\kappa_1)^2}{(\kappa_1-\kappa_2)^2}.$$
 Then 
 $$2e_2(\kappa_1)^2=-(K-\lambda) \kappa_2(\kappa_1-\kappa_2) -\kappa_1\kappa_2 (\kappa_1-\kappa_2)^2.$$
 Differentiating with respect to $e_2$, we have 
\begin{equation}\label{fp2}
4e_2(\kappa_1)e_{22}(\kappa_1)=-e_2(\kappa_1)\kappa_2\left( (\kappa_2+2\kappa_1)(\kappa_1-\kappa_2)+ K-\lambda + (\kappa_1-\kappa_2)^2\right).
\end{equation}
 The function  $e_2(\kappa_1)$ is not zero identically because otherwise, we have $\mu=0$ and the forth equation in \eqref{b3} yields $K-\lambda=0$. This implies that both principal curvatures are constant which it is not possible. 
  
 We now insert  \eqref{e22} in \eqref{fp2}, obtaining a polynomial on the function $\kappa_1$, namely, 
 $3\kappa_1^2-6\kappa_2\kappa_1-5\lambda=0$. This gives a contradiction because $\kappa_1$ is a non-constant function. We conclude that the case $\kappa_2\not=0$ cannot occur.
 \end{enumerate}
  \end{proof}
  
 \begin{remark} Surfaces in Euclidean space that make a constant angle with a fixed direction are classified in \cite{mn}. These surfaces have zero Gauss curvature $K=0$, in particular, they are $\lambda$-translating solitons of the GCF if $|\lambda|\leq 1$. 
 \end{remark}
 
 \begin{remark} Cylindrical surfaces $C\times\r\vec{v}$ are examples of $\lambda$-translating solitons for $K=0$ and $\lambda=0$. This family of surfaces includes circular cylinders but there are more surfaces. Here we point out a gap in Theorem 1.2 of \cite{fl}. In that result, the authors do not obtain the surfaces of type $C\times\r\vec{v}$, which satisfy $K^\alpha=\langle N,\vec{v}\rangle$ for $K=0$ and $\vec{v}$ parallel to the rulings of the surfaces. This is because in the proof, the authors assume $\kappa_2=0$ but the principal curvature $\kappa_1$ (the curvature of the curve $C$) is, in general, a function.
 \end{remark}
%%%%%%%%%%%]
\section{Proof of Theorem \ref{t1} }\label{s4}
%%%%%%%%%%%%%%%%%%%%%%%%%%%%%%%%%%
%%%%%%%%%%%%%%%%%%%%%%%%
 
Let $\Sigma$ be a $\lambda$-translating soliton with constant mean curvature $H$. Let  $2H=\kappa_1+\kappa_2$.  If a principal curvature is constant, then the other one is also constant because $H$ is constant. Then $\Sigma$ is a isoparametric surface hence it  is a plane, a sphere or a circular cylinder. However, spheres are not $\lambda$-translating solitons. This proves the theorem for this situation. 

From now on, we will assume that neither  $\kappa_1$ nor $\kappa_2$ are constant and we will arrive to a contradiction. Let $p\in \Sigma$ be a non-umbilic point and let   $\Omega\subset \Sigma$ be an open set of $p$ formed by non-umbilic points. We follow the same notation of Sect.   \ref{s2}. The main idea is to express all expressions in terms of $\kappa_1$ and $H$ and next, use that $H$ is a constant function. First, we have
\begin{equation}\label{case1}
\begin{split}
\kappa_2&=2H-\kappa_1,\\
 K&=\kappa_1(2H-\kappa_1),\\		
\kappa_1-\kappa_2&=2(\kappa_1-H).
\end{split}
\end{equation}
In a subdomain of $\Omega$, if necessary, we can assume $\kappa_1\not=0$ and $\kappa_1-H\not=0$ because the principal curvatures are not constant. 

We work with Eq.  \eqref{bb}. From \eqref{case1}, we have $e_1(\kappa_2)=-e_1(\kappa_1)$ and $e_2(\kappa_2)=-e_2(\kappa_1)$. Then the identity \eqref{bb} is now
\begin{equation}\label{c11}
\kappa_1(2H-\kappa_1)=\frac{e_{11}(\kappa_1)+e_{22}(\kappa_1)}{2(\kappa_1-H)}-\frac{3(e_1(\kappa_1)^2+e_2(\kappa_1)^2)}{4(\kappa_1-H)^2}.
\end{equation}
In \eqref{case1}, we differentiate $K$  with respect to $e_1$ and $e_2$, obtaining
\begin{equation*}
\left\{
\begin{split}
e_1(K)&=-2( \kappa_1-H)e_1(\kappa_1),\\
e_2(K)&=-2(\kappa_1-H)e_2(\kappa_1).
\end{split}
\right.
\end{equation*}
From \eqref{b3}, we obtain 
\begin{equation*}
\begin{split}
0&=2(H-\kappa_1)e_1(\kappa_1)+\gamma\kappa_1,\\
0&=2(H-\kappa_1)e_2(\kappa_1)+\mu(2H-\kappa_1).
\end{split}
\end{equation*}
This gives 
\begin{equation}\label{c1}
\gamma=\frac{2(\kappa_1-H)e_1(\kappa_1)}{\kappa_1},\quad \mu=\frac{2(\kappa_1-H)e_2(\kappa_1)}{2H-\kappa_1}.
\end{equation}
From \eqref{b1}, we obtain 
\begin{equation}\label{c2}
\omega(e_1)=\frac{e_2(\kappa_1)}{2(\kappa_1-H)},\quad \omega(e_2)=-\frac{e_1(\kappa_1)}{2(\kappa_1-H)}.
\end{equation}
Substituting in  \eqref{b3}, and using the expressions of $\gamma$ and $\mu$ given in \eqref{c1}, we have 
\begin{equation}\label{12}
\left\{
\begin{split}
e_1(\gamma)&=\frac{e_2(\kappa_1)^2}{2H-\kappa_1}+(K-\lambda)\kappa_1\\
e_1(\mu)&=-\frac{e_1(\kappa_1)e_2(\kappa_1)}{\kappa_1}\\
e_2(\gamma)&=-\frac{e_1(\kappa_1)e_2(\kappa_1)}{2H-\kappa_1}\\
e_2(\mu)&=\frac{e_1(\kappa_1)^2}{\kappa_1}+(K-\lambda)(2H-\kappa_1).
\end{split}\right.
\end{equation}

On the other hand,   differentiating \eqref{c1} with respect to $e_1$ and $e_2$, we obtain
\begin{equation}\label{13}
\left\{
\begin{split}
e_1(\gamma)&=\frac{2(\kappa_1-H)}{\kappa_1}e_{11}(\kappa_1)+\frac{2H}{\kappa_1^2}e_1(\kappa_1)^2\\
e_1(\mu)&=\frac{2(\kappa_1-H)}{2H-\kappa_1}e_{12}(\kappa_1)+\frac{2H}{(2H-\kappa_1)^2}e_1(\kappa_1)e_2(\kappa_1)\\
e_2(\gamma)&=\frac{2(\kappa_1-H)}{\kappa_1}e_{21}(\kappa_1)+\frac{2H}{\kappa_1^2}e_1(\kappa_1)e_2(\kappa_1)\\
e_2(\mu)&=\frac{2(\kappa_1-H)}{2H-\kappa_1}e_{22}(\kappa_1)+\frac{2H}{(2H-\kappa_1)^2}e_2(\kappa_1)^2.
\end{split}\right.
\end{equation}

From \eqref{12} and \eqref{13}, we obtain the expressions of $e_{ij}(\kappa_1)$, namely, 
\begin{subequations}
 \begin{align}
 e_{11}(\kappa_1)&=-\frac{He_1(\kappa_1)^2}{\kappa_1(\kappa_1-H)}+\frac{\kappa_1 e_2(\kappa_1)^2}{2(\kappa_1-H)(2H-\kappa_1)}+\frac{(K-\lambda)\kappa_1^2}{2(\kappa_1-H)},\label{c7}\\
e_{12}(\kappa_1)&=-\frac{\kappa_1^2-2H\kappa_1+4H^2}{2\kappa_1(\kappa_1-H)(2H-\kappa_1)}e_1(\kappa_1)e_2(\kappa_1),\label{c9}\\
e_{22}(\kappa_1)&=\frac{(2H-\kappa_1)e_1(\kappa_1)^2}{2\kappa_1(\kappa_1-H)}-\frac{He_2(\kappa_1)^2}{(\kappa_1-H)(2H-\kappa_1)}+\frac{(K-\lambda)(2H-\kappa_1)^2}{2(\kappa_1-H)}.\label{c10}
\end{align}
\end{subequations}
Since $\kappa_1$ cannot be constant in an open set of $\Sigma$,   we can assume without loss of generality  that $e_2(\kappa_1)\not=0$ in $\Omega$. Looking Eq. \eqref{c7}, we have two possibilities.

\begin{enumerate} 
\item Case  $e_1(\kappa_1)=0$ identically in $\Omega$. Then \eqref{c7}  implies
\begin{equation}\label{e77}
e_2(\kappa_1)^2=- \kappa_1(2H-\kappa_1)(K-\lambda).
\end{equation}
Then \eqref{c10} is 
\begin{equation}\label{e88}
e_{22}(\kappa_1)=\frac{K-\lambda}{2(\kappa_1-H)}( \kappa_1^2-2H\kappa_1+4H^2).
\end{equation}
Differentiating Eq. \eqref{e77} with respect to $e_2$,  and because $e_2(\kappa_1)\not=0$, we obtain 
\begin{equation}\label{e99}
e_{22}(\kappa_1)=(\kappa_1-H)(2K-\lambda). 
\end{equation}
Comparing \eqref{e88} and \eqref{e99}, we deduce
$$(\kappa_1-H)(2H-\lambda)=\frac{K-\lambda}{2(\kappa_1-H)}(\kappa_1^2-2H\kappa_1+4H^2).$$
This gives a polynomial on $\kappa_1$ with constant coefficients of degree $4$, namely, 
$$3\kappa_1^4-12H \kappa_1^3+2(6H^2+\lambda)\kappa_1^2-4H\lambda \kappa_1+2H^2\lambda=0.$$
 This   implies that $\kappa_1$ is constant, obtaining a contradiction.

\item Case  $e_1(\kappa_1)\not=0$   in $\Omega$. Substituting \eqref{c7} and \eqref{c10} into \eqref{c11}, we have
\begin{equation}\label{c14}
    2e_1(\kappa_1)^2+2e_2(\kappa_1)^2+R_1=0,
\end{equation}
where
$$R_1=(2H^2-2H\kappa+\kappa^2)\lambda-\kappa^2(2H-\kappa)^2.$$
Differentiating \eqref{c14} with respect to $e_1$, we have
\begin{equation}\label{c15}
4e_1(\kappa_1)e_{11}(\kappa_1)+4e_2(\kappa_1)e_{12}(\kappa_1)+e_1(R_1)=0.
\end{equation}
By using \eqref{c7} and \eqref{c9}, and simplifying by $e_1(\kappa_1)$,  equation \eqref{c15} can be rewritten by
\begin{equation}\label{c16}
2H e_1(\kappa_1)^2+2H e_2(\kappa_1)^2+R_2=0,
\end{equation}
where
$$R_2=\kappa_1\lambda  \left(2 H^2-4 H \kappa_1+3 \kappa_1^2\right)+\kappa_1^2 (2 H-3 \kappa_1) (2H-\kappa_1)^2.$$
Combining \eqref{c14}  and \eqref{c16}, we obtain  
$$
\left(2 H^2-3 H \kappa_1+\kappa_1^2\right) \left(\lambda  \left(-2 H^2+2 H \kappa_1-3 \kappa_1^2\right)+3 \kappa_1^2 (\kappa_1-2 H)^2\right)=0.$$ 
This is a polynomial equation on $\kappa_1$ with constant coefficients, deducing that $\kappa_1$ is constant. This       contradiction finishes the proof of  Theorem \ref{t1}.

\end{enumerate}
 
As a consequence of the above computations, we get the classification of $\lambda$-translating solitons of the GCF with constant curvature. If $K$ is constant, it is immediate that $\langle N,\vec{v}\rangle=K-\lambda$ and the surface   falls in the family of surfaces of Euclidean space $\r^3$ making a constant angle with a fix direction \cite{mn}. In particular, $K=0$.

\begin{theorem} Surfaces of Thm. \ref{t-c} are the only    $\lambda$-translating solitons of the GCF with constant curvature $K$. In particular, $K=0$. 
\end{theorem}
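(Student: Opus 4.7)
The plan is straightforward: reduce the statement immediately to the constant-angle surface classification of \cite{mn} that already underlies Theorem \ref{t-c}. First, I would observe that if $K$ is a constant function on $\Sigma$, then the defining soliton identity \eqref{eq1} yields $\langle N,\vec{v}\rangle = K-\lambda$, which is also constant. In other words, the Gauss map of $\Sigma$ makes a fixed angle $\theta$ with the unit vector $\vec{v}$, so $\Sigma$ is a constant-angle surface in $\r^3$.

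The key input is that every such surface has identically zero Gauss curvature, a fact that is part of the classification of constant-angle surfaces in \cite{mn} and was already quoted in the remark following Theorem \ref{t-c}. Hence $K=0$ on $\Sigma$, which proves the ``in particular'' clause. Substituting back into \eqref{eq1} gives $\langle N,\vec{v}\rangle = -\lambda$, forcing $|\lambda|\le 1$. The three possibilities $|\lambda|=1$, $\lambda=0$, and $0<|\lambda|<1$ then correspond exactly to the three cases listed in Theorem \ref{t-c}: a plane orthogonal to $\vec{v}$, a cylindrical surface $C\times\r\vec{v}$ with $C$ contained in a plane orthogonal to $\vec{v}$, and a genuine constant-angle surface with $\cos\theta=|\lambda|$, respectively. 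Conversely, each of these is already known from Theorem \ref{t-c} to be a $\lambda$-translating soliton, so the characterization is tight.

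There is essentially no obstacle in this argument, since all the heavy lifting has been carried out in Theorem \ref{t-c} and in the classification \cite{mn}; the statement is really a clean by-product of what was proved in Section \ref{s4}. The only minor bookkeeping is to note that constancy of $\langle N,\vec{v}\rangle$ is a global condition on $\Sigma$, so no restriction to open sets of non-umbilic points is required and the conclusion applies to $\Sigma$ as a whole.
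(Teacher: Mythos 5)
Your argument is correct, but it reaches the key conclusion $K=0$ by a different route than the paper's formal proof. You deduce from constancy of $K$ that $\langle N,\vec{v}\rangle=K-\lambda$ is constant, hence that $\Sigma$ is a constant-angle surface, and then you import the fact from \cite{mn} that all such surfaces are flat; this is in fact exactly the informal justification the author gives in the sentence preceding the theorem. The proof inside the \emph{proof} environment, however, derives $K=0$ intrinsically from the soliton structure equations: the last two identities of \eqref{b3} give $e_1(K)+\gamma\kappa_1=0$ and $e_2(K)+\mu\kappa_2=0$, so constancy of $K$ forces $\gamma\kappa_1=\mu\kappa_2=0$; either $\gamma=\mu=0$ identically, in which case $\vec{v}^\top=0$ and $\Sigma$ is a plane orthogonal to $\vec{v}$, or a principal curvature vanishes and $K=0$, after which Theorem \ref{t-c} applies. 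Your version is shorter, avoids Lemma \ref{l1} entirely, and works globally without restricting to the non-umbilic set; the paper's version is self-contained in that it does not need to quote the flatness of constant-angle surfaces from \cite{mn} (one could also obtain it in one line: differentiating $\langle N,\vec{v}\rangle=\mathrm{const}$ gives $S\vec{v}^\top=0$, so either $\vec v^\top=0$ or $\det S=0$). One small bookkeeping caveat in your write-up: the correspondence you draw between the values $|\lambda|=1$, $\lambda=0$, $0<|\lambda|<1$ and the three cases of Theorem \ref{t-c} is not a clean trichotomy (e.g.\ a tilted plane is a degenerate constant-angle surface occurring for any $|\lambda|\le 1$), but this does not affect the validity of the classification.
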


\begin{proof}
 From \eqref{b3}, we have   $\gamma\kappa_1=0$ and $\mu\kappa_2=0$. If $\gamma=\mu=0$ identically, then $\vec{v}^\top$ and the surface is a plane orthogonal to $\vec{v}$. Otherwise, $K=0$. Then $\langle N,\vec{v}\rangle=-\lambda$ and the result follows from Thm. \ref{t-c}. 
\end{proof}

%%%%%%%%%%%%%%%%%%%%
\section{$\lambda$-shrinkers: a principal curvature is constant}\label{s5}
%%%%%%%%%%%%%%%%%%%%%

In this and in the following sections, we study $\lambda$-shrinkers of the GCF. In this section, we classify those shrinkers with one constant principal curvature.  Denote by $\Phi\colon\Sigma\to\r^3$ the immersion of the $\lambda$-shrinker. Let  decompose the position vector $\Phi$ in its tangent and normal part with respect to $\Sigma$,  
\begin{equation*}
\Phi^\top=\gamma e_1+\mu e_2,
\end{equation*}
for some smooth functions $\gamma$ and $\mu$. The equivalent  result of Lem. \ref{l1} is the following. 
 
\begin{lemma}\label{l2}
Let $\Omega$ be a subset of non-umbilical points of a $\lambda$-translating soliton of the GCF. Then the functions   $\gamma$ and $\mu$ satisfy the following equations
    \begin{equation}\label{d3}
    \left\{
\begin{split}
            &e_{1}(\gamma)-\mu\omega(e_1)-\frac{K-\lambda}{\alpha}\kappa_1=1\\
            &e_{2}(\gamma)-\mu\omega(e_2)=0\\
            &e_{1}(\mu)+\gamma\omega(e_1)=0\\
            &e_{2}(\mu)+\gamma\omega(e_2)-\frac{K-\lambda}{\alpha}\kappa_2=1\\
            &e_{1}(\frac{K}{\alpha})+\gamma\kappa_1=0\\
            &e_{2}(\frac{K}{\alpha})+\mu\kappa_2=0.
        \end{split}\right.
    \end{equation}
\end{lemma}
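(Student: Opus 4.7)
The plan is to mirror the proof of Lemma \ref{l1}, with the single substantive change being that the position vector $\Phi$ is not parallel in $\r^3$; instead one has the identity $\overline{\nabla}_{X}\Phi=X$ for every $X\in\mathfrak{X}(\Sigma)$. From the $\lambda$-shrinker equation \eqref{eq2} one obtains $\Phi^{\perp}=\frac{K-\lambda}{\alpha}N$, so that
$$\Phi=\gamma e_{1}+\mu e_{2}+\frac{K-\lambda}{\alpha}N.$$
First I would apply $\overline{\nabla}_{X}$ to both sides of this decomposition and use the Gauss and Weingarten formulas to obtain
$$X=\nabla_{X}\Phi^{\top}+h(X,\Phi^{\top})+\frac{X(K)}{\alpha}N-\frac{K-\lambda}{\alpha}SX,$$
exactly as in the proof of Lemma \ref{l1} but with $X$ (instead of $0$) on the left, and with $\frac{X(K)}{\alpha}$ in place of $X(K)$.

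Next I would split this vector identity into its tangent and normal components, yielding
$$\nabla_{X}\Phi^{\top}-\frac{K-\lambda}{\alpha}SX=X,\qquad h(X,\Phi^{\top})+\frac{X(K)}{\alpha}N=0.$$
Substituting $X=e_{1}$ and $X=e_{2}$ and expanding $\nabla_{e_{i}}(\gamma e_{1}+\mu e_{2})$ via \eqref{eiej}, together with $Se_{i}=\kappa_{i}e_{i}$ from \eqref{b0}, the tangent equation for $X=e_{1}$ produces
$$\bigl(e_{1}(\gamma)-\mu\omega(e_{1})-\tfrac{K-\lambda}{\alpha}\kappa_{1}\bigr)e_{1}+\bigl(e_{1}(\mu)+\gamma\omega(e_{1})\bigr)e_{2}=e_{1},$$
whose coefficients give the first and third equations of \eqref{d3}. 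The analogous manipulation with $X=e_{2}$ gives the second and fourth equations; the presence of the constant $1$ on the right-hand side of the first and fourth equations, rather than $0$ as in \eqref{b3}, is the sole consequence of $\overline{\nabla}_{X}\Phi=X$.

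For the normal part, since $\{e_{1},e_{2}\}$ are lines of curvature one has $h(e_{i},e_{j})=\delta_{ij}\kappa_{i}N$, so $h(e_{1},\Phi^{\top})=\gamma\kappa_{1}N$ and $h(e_{2},\Phi^{\top})=\mu\kappa_{2}N$. Identifying normal components then yields the fifth and sixth equations of \eqref{d3}. There is no real obstacle here: the only point to verify carefully is the sign and normalization arising from the factor $\frac{1}{\alpha}$ attached to $K$ in the normal component, which is exactly why \eqref{d3} records $e_{i}(K/\alpha)$ rather than $e_{i}(K)$. The computation is otherwise a line-by-line transcription of the proof of Lemma \ref{l1}.
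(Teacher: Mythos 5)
Your proposal is correct and follows essentially the same route as the paper: decompose $\Phi$ into tangent and normal parts using $\Phi^{\perp}=\frac{K-\lambda}{\alpha}N$, apply $\overline{\nabla}_{X}\Phi=X$ together with the Gauss and Weingarten formulas, and identify coefficients with respect to $\{e_{1},e_{2},N\}$. The details (the $1$ on the right-hand side of the first and fourth equations, and the factor $\frac{1}{\alpha}$ in the normal component) are handled exactly as in the paper's proof.
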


\begin{proof} The normal part of $\Phi$ is   $\Phi^{\perp}=\frac{K-\lambda}{\alpha}N$. If $X\in\mathfrak{X}(\Sigma)$, then   $\nabla_X\Phi=X$. Then the Gauss and
Weingarten formulas imply 
\begin{equation*}
\begin{split}
    X&=\overline{\nabla}_{X}\Phi=\overline{\nabla}_{X}(\Phi^{\top}+\Phi^{\perp})\\
    &={\nabla}_{X}\Phi^{\top}+h(X,\Phi^{\top})+X(\frac{K}{\alpha})N-\frac{K-\lambda}{\alpha}SX 
\end{split}
\end{equation*}
for all   $X\in\mathfrak{X}(\Sigma)$.
The difference appears in the tangent part of the above expression, obtaining  
\begin{equation*} 
    \left\{
    \begin{array}{ll}
        {\nabla}_{X}\Phi^{\top}-\frac{K-\lambda}{\alpha}SX=X\\
        h(X, \Phi^{\top})+X(\frac{K}{\alpha})N=0.
    \end{array}
    \right.
\end{equation*}
Equations \eqref{d3} are obtained by substituting $X$ by $e_1$ and   $e_2$. and writing in coordinates with respect to $\{e_1,e_2,N\}$. 
\end{proof}
 
As in the case of shrinkers, we classify the $\lambda$-shrinkers with one constant principal curvature.  
 
\begin{theorem}\label{t-d} Let $\Sigma$ be a $\lambda$-shrinker of the GCF. If a principal curvature is constant, then $\Sigma$ is a plane, a sphere or a circular cylinder. 
\end{theorem}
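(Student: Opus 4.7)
The plan is to mirror the strategy of Theorem \ref{t-c}, now using Lemma \ref{l2} in place of Lemma \ref{l1}. Assume without loss of generality that $\kappa_2$ is constant. If $\kappa_1$ is also constant, then $\Sigma$ is isoparametric, hence a plane, a sphere, or a circular cylinder, and each of these (suitably positioned relative to the origin) is readily checked to satisfy \eqref{eq2}. So it suffices to assume $\kappa_1$ is non-constant and derive a contradiction. Working in a non-umbilical open set $\Omega$ with line-of-curvature coordinates $\{e_1,e_2\}$, the Codazzi identity \eqref{b1} together with $e_1(\kappa_2)=0$ and $\kappa_1\neq\kappa_2$ on $\Omega$ immediately yields $\omega(e_2)=0$.

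Now split into the subcases $\kappa_2\neq 0$ and $\kappa_2=0$. In the first subcase, equations (5) and (6) of \eqref{d3} give explicit expressions for $\gamma$ and $\mu$ in terms of $e_1(\kappa_1)$ and $e_2(\kappa_1)$, and equation (4) yields a closed form for $e_{22}(\kappa_1)$ as a polynomial in $\kappa_1$ with constant coefficients. Substituting these into the Gauss equation \eqref{bb}, which simplifies drastically because $\kappa_2$ is constant (so that all $e_1$-derivatives of $\kappa_2$ vanish), produces an expression for $e_2(\kappa_1)^2$ as a polynomial $P(\kappa_1)$. Differentiating this identity once along $e_2$, substituting back the expression for $e_{22}(\kappa_1)$, and cancelling the factor $e_2(\kappa_1)$ (which is nonzero on a subdomain by non-constancy of $\kappa_1$) produces a nontrivial polynomial equation in $\kappa_1$ with constant coefficients, forcing $\kappa_1$ to be constant---the desired contradiction.

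In the subcase $\kappa_2=0$ one has $K=0$, and equation (5) of \eqref{d3} forces $\gamma=0$ on $\{\kappa_1\neq 0\}$. The surviving equations reduce to $\mu\omega(e_1)=\frac{\lambda}{\alpha}\kappa_1-1$, $e_1(\mu)=0$ and $e_2(\mu)=1$, supplemented by $e_2(\kappa_1)=\kappa_1\omega(e_1)$ from Codazzi and $e_2(\omega(e_1))=\omega(e_1)^2$ from the Gauss equation. Differentiating the relation $\mu\omega(e_1)=\frac{\lambda}{\alpha}\kappa_1-1$ along $e_1$, and combining with the commutator identity $[e_1,e_2]=-\omega(e_1)e_1$ applied to $\kappa_1$, should again yield a polynomial identity in $\kappa_1$ with constant coefficients, completing the contradiction. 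I expect this second subcase to be the main obstacle: the compatibility system for $\kappa_2=0$ admits developable solutions (for instance, cones through the origin when $\lambda=0$) that sit algebraically close to genuine counterexamples, so the elimination step must be carried out with particular care to exclude such possibilities and extract an honest polynomial relation that forces $\kappa_1$ to be constant.
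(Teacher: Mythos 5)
Your overall strategy coincides with the paper's: dispose of the isoparametric case, then split into $\kappa_2\neq 0$ and $\kappa_2=0$ and hunt for a polynomial identity in $\kappa_1$. The subcase $\kappa_2\neq 0$ goes through essentially as you describe and as in the paper. The one loose end there is your justification that $e_2(\kappa_1)\neq 0$: non-constancy of $\kappa_1$ only guarantees that $e_1(\kappa_1)$ or $e_2(\kappa_1)$ is nonzero somewhere. This is easily patched, since if $e_2(\kappa_1)\equiv 0$ the Gauss equation already reads $P(\kappa_1)=0$ for a polynomial with leading coefficient $-\kappa_2\neq 0$ (the paper instead argues via $\mu=0$ in the fourth equation of \eqref{d3}); but it should be said.

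The genuine gap is exactly where you feared: the subcase $\kappa_2=0$. The system you write down, $\mu\omega(e_1)=\frac{\lambda}{\alpha}\kappa_1-1$, $e_1(\mu)=0$, $e_2(\mu)=1$, $e_2(\kappa_1)=\kappa_1\omega(e_1)$, $e_2(\omega(e_1))=\omega(e_1)^2$, is correct but \emph{consistent}: differentiating the first relation along $e_2$ and substituting the others reproduces the same relation identically, while differentiating along $e_1$ only yields $\mu\,e_1(\omega(e_1))=\frac{\lambda}{\alpha}e_1(\kappa_1)$, which is vacuous on solutions constant along $e_1$. No polynomial identity in $\kappa_1$ can be extracted, because the developable solutions you mention are not merely ``algebraically close'' to counterexamples --- they are counterexamples. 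For $\lambda=0$, any cone with vertex at the origin satisfies $K=0=\alpha\langle N,\Phi\rangle+\lambda$, has $\kappa_2=0$ constant and $\kappa_1$ non-constant, and is neither a plane, a sphere nor a circular cylinder. For the record, the paper's own treatment of this subcase obtains its contradiction only because \eqref{con} is transcribed from the first equation of \eqref{d3} with the sign of the $\mu\omega(e_1)$ term flipped; with the correct sign the computation closes into a tautology, consistent with the cone example. So your instinct to flag this subcase as the main obstacle was sound, and no amount of care in the elimination will rescue it as stated: the case $K=0$, $\lambda=0$ requires either an additional hypothesis or an enlarged list of surfaces in the conclusion.
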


\begin{proof} Without loss of generality, assume that  the principal curvature $\kappa_2$ is constant. Again, if   $\kappa_1$ is constant, then the surface is isoparametric and because $\Sigma$ is a $\lambda$-shrinker, then  $\Sigma$ is a plane, a sphere or a circular cylinder. This proves the result  in this situation.

Suppose now that $\kappa_1$ is not constant and we will arrive to a contradiction.

\begin{enumerate}
\item Case $\kappa_2=0$. Then $K=0$. From \eqref{b1} and   \eqref{d3}, we obtain $\gamma=0$, $\omega(e_2)=0$ and $e_1(\mu)=0$, $e_2(\mu)=1$.      In particular,  $\mu\not=0$ identically. The first equation of \eqref{d3} is 
$\mu\omega(e_1) +\frac{\lambda}{\alpha}\kappa_1=1$. From \eqref{b1} we know $\omega(e_1)=\frac{e_2(\kappa_1)}{\kappa_1}$, hence
\begin{equation}\label{con}
\frac{e_2(\kappa_1)}{\kappa_1}\mu+\frac{\lambda}{\alpha}\kappa_1=1.
\end{equation}
Differentiating with respect to $e_2$ and taking into account that $e_2(\mu)=1$, we have  
\begin{equation}\label{con2}
\frac{e_{22}(\kappa_1)}{\kappa_1}\mu+\frac{e_2(\kappa_1)}{\kappa_1}-\frac{e_2(\kappa_1)^2}{\kappa_1^2}\mu+\frac{\lambda}{\alpha}e_2(\kappa_1)=0.
\end{equation}
On the other hand, the identity \eqref{bb} is 
$$0=\frac{e_{22}(\kappa_1)}{\kappa_1}-2\frac{e_2(\kappa_1)^2}{\kappa_1^2}=0.$$ 
From this equation, we get the value of $e_{22}(\kappa_1)$ and putting in \eqref{con2}, we obtain 
$$\frac{e_2(\kappa_1)^2}{\kappa_1^2}\mu+\frac{e_2(\kappa_1)}{\kappa_1}+\frac{\lambda}{\alpha}e_2(\kappa_1)=0.$$
Because $e_2(\kappa_1)\not=0$, then
$$\frac{e_2(\kappa_1)}{\kappa_1^2}\mu+\frac{1}{\kappa_1}+\frac{\lambda}{\alpha}=0.$$
This gives a contradiction with \eqref{con}.
\item Case $\kappa_2\not=0$.    Let us work on the  open set $\Omega\subset \Sigma$ formed by non-umbilic points. From \eqref{b1} we have $\omega(e_2)=0$. By using the forth and sixth equations of \eqref{d3}, we deduce   $\mu=- e_2(\kappa_1)/\alpha$ and $e_2(\mu)=1+\frac{K-\lambda}{\alpha}\kappa_2$. From the first identity, we have  
$e_2(\mu)=-  e_{22}(\kappa_1)/\alpha$. Then 
\begin{equation}\label{d22}
e_{22}(\kappa_1)=-(K-\lambda)\kappa_2-\alpha.
\end{equation}
Using this identity, and because $\kappa_2$ is constant, equation \eqref{bb} is 
$$\kappa_1\kappa_2=\frac{e_{22}(\kappa_1)}{\kappa_1-\kappa_2}-2\frac{e_2(\kappa_1)^2}{(\kappa_1-\kappa_2)^2}=
-\frac{(K-\lambda)\kappa_2+\alpha}{\kappa_1-\kappa_2}-2\frac{e_2(\kappa_1)^2}{(\kappa_1-\kappa_2)^2}.$$
 Then 
 $$2e_2(\kappa_1)^2=-(\kappa_1-\kappa_2)(\kappa_1 K+\alpha-\lambda\kappa_2).$$
 Differentiating with respect to $e_2$, we have 
 \begin{equation}\label{fp22}
4e_2(\kappa_1)e_{22}(\kappa_1)=-2e_2(\kappa_1)\left(   \alpha -4 H^2 \kappa_1+9 H \kappa_1^2-3 H \lambda -4 \kappa_1^3+2 \lambda  \kappa_1 \right).
\end{equation}
 The function  $e_2(\kappa_1)$ is not zero because otherwise, we have $\mu=0$ and the forth equation in \eqref{d3} yields $K-\lambda=1$. Then  $K$ is constant and  both principal curvatures are constant because $\kappa_2\not=0$. This is a contradiction because this situation was initiated discarded.

 We insert  \eqref{d22} in \eqref{fp22}. Using that $\kappa_2\not=0$, we obtain  a polynomial on the function $\kappa_1$, namely, 
 $$-6 \kappa_1^3+17H\kappa_1^2-12H^2\kappa_1+\lambda H-\alpha =0.$$
  This gives a contradiction because $\kappa_1$ is a non-constant function. We conclude that the case $\kappa_2\not=0$ cannot occur.
 \end{enumerate}
  \end{proof}

 %%%%%%%%%%%%%%%%%%%%%%%%%%%%%%%%%%%%
\section{Proof of Theorem \ref{t2}}\label{s6}
 %%%%%%%

We   prove Thm. \ref{t2}. If one principal curvature is constant, then the other one is also constant because $H$ is constant. Thus $\Sigma$ is a plane, a sphere or a circular cylinder.  This proves the theorem in this situation. 

From now we suppose that both principal curvatures are not zero constant. We follow as in the proof of Thm. \ref{t1}. By working in a subdomain of $\Omega$, if necessary, we can assume $\kappa_1\not=0$ and $\kappa_1-H\not=0$ because the principal curvatures are not constant. We have 
\begin{equation*}
\left\{
\begin{split}
e_1(\frac{K}{\alpha})&=-\frac{2}{\alpha}(\kappa_1-H)e_1(\kappa_1),\\
e_2(\frac{K}{\alpha})&=-\frac{2}{\alpha}(\kappa_1-H)e_2(\kappa_1).
\end{split}\right.
\end{equation*}
Combining with the last two equations of \eqref{d3}, we deduce
\begin{equation}\label{gm}
\gamma=\frac{2(\kappa_1-H)}{\alpha\kappa_1}e_1(\kappa_1),\quad \mu=\frac{2(\kappa_1-H)}{\alpha(2H-\kappa_1)}e_2(\kappa_1).
\end{equation}
We use \eqref{c2} together \eqref{gm} to substitute in the  first four equations of \eqref{d3}. This gives 
  
\begin{equation}\label{d12}
\left\{
\begin{split}
e_1(\gamma)&=\frac{e_2(\kappa_1)^2}{\alpha(2H-\kappa_1)}+\frac{K-\lambda}{\alpha}\kappa_1+1\\
e_1(\mu)&=-\frac{e_1(\kappa_1)e_2(\kappa_1)}{\alpha\kappa_1}\\
e_2(\gamma)&=-\frac{e_1(\kappa_1)e_2(\kappa_1)}{\alpha(2H-\kappa_1)}\\
e_2(\mu)&=\frac{e_1(\kappa_1)^2}{\alpha\kappa_1}+\frac{K-\lambda}{\alpha}(2H-\kappa_1)+1.
\end{split}\right.
\end{equation}
We differentiate $\gamma$ and $\mu$ in \eqref{gm}, obtaining
\begin{equation}\label{d13}\left\{
\begin{split}
e_1(\gamma)&=\frac{2(\kappa_1-H)}{\alpha\kappa_1}e_{11}(\kappa_1)+\frac{2H}{\alpha \kappa_1^2}e_1(\kappa_1)^2 \\
e_1(\mu)&= \frac{2(\kappa_1-H)}{\alpha(2H-\kappa_1)}e_{12}(\kappa_1)+\frac{2H}{\alpha(2H-\kappa_1)^2}e_1(\kappa_1)e_2(\kappa_1)\\
e_2(\gamma)&= \frac{2(\kappa_1-H)}{\alpha \kappa_1}e_{21}(\kappa_1)+\frac{2H}{\alpha\kappa_1^2}e_1(\kappa_1)e_2(\kappa_1)\\
e_2(\mu)&= \frac{2(\kappa_1-H)}{\alpha(2H-\kappa_1)}e_{22}(\kappa_1)+\frac{2H}{\alpha(2H-\kappa_1)^2}e_2(\kappa_1)^2.
\end{split}\right.
\end{equation}

 Combining \eqref{d13} with \eqref{d12}, we obtain 

\begin{subequations}
 \begin{align}e_{11}(\kappa_1)&=-\frac{He_1(\kappa_1)^2}{\kappa_1(\kappa_1-H)}+\frac{\kappa_1 e_2(\kappa_1)^2}{2(\kappa_1-H)(2H-\kappa_1)}+\frac{((K-\lambda)\kappa_1+\alpha)\kappa_1}{2(\kappa_1-H)}\label{d7}\\
e_{12}(\kappa_1)&=-\frac{\kappa_1^2-2H\kappa_1+4H^2}{2\kappa_1(\kappa_1-H)(2H-\kappa_1)}e_1(\kappa_1)e_2(\kappa_1)\label{d9}\\
e_{22}(\kappa_1)&=\frac{(2H-\kappa_1)e_1(\kappa_1)^2}{2\kappa_1(\kappa_1-H)}-\frac{He_2(\kappa_1)^2}{(\kappa_1-H)(2H-\kappa_1)}+\frac{\left((K-\lambda)(2H-\kappa_1)+\alpha\right)(2H-\kappa_1)}{2(\kappa_1-H)}.\label{d10}
\end{align}
\end{subequations}
We use that  $\kappa_1$ cannot be constant in an open set of $\Sigma$. Without loss of generality, we can assume that $e_2(\kappa_1)\not=0$ in $\Omega$. Two possibilities are discussed.
\begin{enumerate} 
\item Case  $e_1(\kappa_1)=0$ identically in $\Omega$. Then \eqref{d7}  implies
$$e_2(\kappa_1)^2=-  (2H-\kappa_1)(\alpha+(K-\lambda)\kappa_1).$$
Then \eqref{d10} is 
$$e_{22}(\kappa_1)=\frac{\alpha(2H-\kappa_1) +2H(\alpha+\kappa_1(K-\lambda))+(2H-\kappa_1)^2(K-\lambda)}{2 (\kappa_1-H)}.$$
Differentiating $e_2(\kappa_1)^2$, and because $e_2(\kappa_1)\not=0$, we obtain 
$$e_{22}(\kappa_1)=\frac{1}{2}\left(\alpha-2 (H-\kappa_1) \left(4 H \kappa_1-2 \kappa_1^2-\lambda \right)\right).$$
Equating the above two expressions of $e_{22}(\kappa_1)$, we obtain a polynomial on $\kappa_1$, which it is
 $$-3\kappa_1^4+12H \kappa_1^3-(12H^2+\lambda)\kappa_1^2+2(\alpha+H\lambda)\kappa_1+H(2H\lambda-5\alpha)=0.$$
 This polynomial on $\kappa_1$ has constant coefficients. Thus $\kappa_1$ is constant. This gives  a contradiction.

\item Case  $e_1(\kappa_1)\not=0$   in $\Omega$.  Substituting \eqref{d7} and \eqref{d10} into \eqref{c11}, we have
\begin{equation}\label{d14}
    2e_1(\kappa_1)^2+2e_2(\kappa_1)^2+R_1=0,
\end{equation}
where
$$R_1=(2H^2-2H\kappa+\kappa^2)\lambda-4 H^2 \kappa_1^2+4 H \kappa_1^3-\alpha H-\kappa_1^4.$$
Differentiating \eqref{d14} with respect to $e_1$, we have
$$
4e_1(\kappa_1)e_{11}(\kappa_1)+4e_2(\kappa_1)e_{12}(\kappa_1)+e_1(R_1)=0.
$$
By using \eqref{d7} and \eqref{d9}, and simplifying by $e_1(\kappa_1)$,  the above equation   can be rewritten as
\begin{equation}\label{d16}
2H e_1(\kappa_1)^2+2H e_2(\kappa_1)^2+R_2=0,
\end{equation}
where
$$R_2=\kappa_1 \left(\kappa_1 \left(-4 H^3+10 H^2 \kappa_1-10 H \kappa_1^2+3 \kappa_1^3-\alpha\right)-H \lambda  (H-2 \kappa_1)\right).$$
Combining \eqref{d14}  and \eqref{d16}, we obtain  
$$
3 \kappa_1^4-6 H \kappa_1^3+H \lambda  (2 H+\kappa_1) -\kappa_1\alpha -H\alpha=0.$$ 
We conclude that  $\kappa_1$ is constant, which it is not possible. This completes   the proof of  Thm. \ref{t2}.

\end{enumerate}

The above computations allow to characterize the $\lambda$-shrinkers with constant Gauss curvature.

\begin{theorem} Planes, spheres are circular cylinders are the only  $\lambda$-shrinkers of the GCF with constant Gaussian curvature.
\end{theorem}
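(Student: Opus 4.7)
The plan is to mimic closely the approach used immediately after Theorem \ref{t1} for the translating-soliton analogue. The shrinker equation \eqref{eq2} together with constancy of $K$ forces $\langle N,\Phi\rangle = (K-\lambda)/\alpha$ to be constant on $\Sigma$. Feeding $e_i(K/\alpha)=0$ into the last two equations of \eqref{d3} yields the pointwise algebraic constraints $\gamma\kappa_1 = 0$ and $\mu\kappa_2 = 0$ on the non-umbilic open set, where $\Phi^\top = \gamma e_1 + \mu e_2$.

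From here I would split into two cases. If $\gamma$ and $\mu$ both vanish identically, then $\Phi^\top \equiv 0$, so the position vector is everywhere normal to the surface. Differentiating $|\Phi|^2$ in any tangent direction gives $\langle X,\Phi\rangle = 0$ for all tangent $X$, hence $|\Phi|$ is constant and $\Sigma$ lies on a sphere centered at the origin. Otherwise, one of $\gamma,\mu$ is nonzero at some point, and the identity $\gamma\kappa_1 = 0$ or $\mu\kappa_2 = 0$ forces a principal curvature to vanish there. Since $K = \kappa_1\kappa_2$ is constant, this vanishing propagates to all of $\Sigma$ and we conclude $K \equiv 0$.

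In the remaining case $K \equiv 0$, at every point at least one principal curvature is zero. On each connected component of the non-umbilic set, $\kappa_1 \neq \kappa_2$ but $\kappa_1\kappa_2 = 0$, so exactly one of them is identically zero there. After relabeling, $\kappa_2 \equiv 0$ is a constant principal curvature, and Theorem \ref{t-d} identifies the component with a piece of a plane or a circular cylinder. Umbilic open subsets with $K=0$ are pieces of planes. A continuity argument across the umbilic locus then promotes the local classification to a global one, giving a plane or a circular cylinder.

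The main obstacle I expect is not analytic but topological: matching the local conclusions across the interface between umbilic and non-umbilic portions, and ensuring that the "zero principal curvature" is coherent across different non-umbilic components. This is, however, essentially the same subtlety that already appears implicitly in the author's invocation of Theorem \ref{t-d}, and it should be routine once one notes that flat umbilic points satisfy $\kappa_1 = \kappa_2 = 0$, which glues the pieces together harmlessly.
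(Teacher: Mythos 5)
Your proposal is correct and shares the paper's skeleton: both arguments feed $e_i(K)=0$ into the last two equations of \eqref{d3} to get $\gamma\kappa_1=0$ and $\mu\kappa_2=0$, and both ultimately route the ``one principal curvature constant'' situation through Theorem~\ref{t-d}. Where you differ is in the organization and in the degenerate case. The paper splits on whether a principal curvature is constant; when both are non-constant it asserts $\gamma=\mu=0$ (this tacitly uses that $K$ constant together with non-constant $\kappa_1,\kappa_2$ forces $\kappa_1,\kappa_2\neq 0$ on a connected non-umbilic piece) and then reads $\frac{K-\lambda}{\alpha}\kappa_1=-1$ off the \emph{first} equation of \eqref{d3}, making $\kappa_1$ constant, a contradiction. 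You instead split on whether $\gamma=\mu=0$, and in that case use the elementary observation that $\Phi^\top\equiv 0$ makes $|\Phi|$ constant, so the surface is a round sphere; this is the same geometric fact the paper extracts algebraically (with $\Phi^\top=0$ the tangential structure equation says $SX=-\frac{\alpha}{K-\lambda}X$, i.e.\ a totally umbilic surface of constant curvature), but your version avoids the unstated non-vanishing hypothesis and delivers the sphere directly rather than as a contradiction. Your other branch --- $K\equiv 0$, hence one principal curvature identically zero on each connected non-umbilic component by the disjoint-closed-sets argument, then Theorem~\ref{t-d} --- is a legitimate reduction. The local-to-global gluing you flag is no worse than what the paper leaves implicit throughout, and is settled exactly as you indicate: a flat umbilic point has $\kappa_1=\kappa_2=0$, so a planar piece cannot abut a cylindrical piece of nonzero curvature, and the classification is coherent across components.
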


\begin{proof}

Regarding if the principal curvature are constant, we discuss two cases.
\begin{enumerate}
\item A principal curvature is constant. Then Thm. \ref{t-d} proves the result.
\item Both principal curvatures are not constant.   The last two equations of \eqref{d3} imply $\gamma=\mu=0$.   Then the first equation of \eqref{d3} implies  
$$\frac{K-\lambda}{\alpha}\kappa_1=-1.$$
Therefore $K\not=\lambda$ and $\kappa_1$ is a constant. This gives a contradiction. This case cannot occur. 
\end{enumerate}
\end{proof}

%%%%%%

\section*{Acknowledgements}
The author  has been partially supported by MINECO/MICINN/FEDER grant no. PID2023-150727NB-I00,  and by the ``Mar\'{\i}a de Maeztu'' Excellence Unit IMAG, reference CEX2020-001105- M, funded by MCINN/AEI/10.13039/ 501100011033/ CEX2020-001105-M.
%%%%%%%%%%%%%%%%%%%%%%%%%%%%%%%%%%%%%%%%%%%%%%%%%%%%%%%%%%%%%%

\end{document}